\theoremstyle{plain}
\newtheorem{theorem}{Theorem}
\begin{document}
\title{Viviani Polytopes and Fermat Points}
\author{Li Zhou (lzhou@polk.edu)}
\date{}
\maketitle

Get some unit vectors.

Assemble them so that they sum to the zero vector. Such arrangements are easily formed in the plane (or in space) using mechanical linkages. They are even easier to create in higher dimensions. Figures 1a and 2a give two planar examples. 

Attach to each unit vector a perpendicular line, that is, make each vector into a unit normal. Shift these lines in the plane in any way you please, while keeping the directions of their unit normals. Figures 1b and 1c are two examples. 

\begin{center}
\includegraphics*[viewport=162 348 448 443]{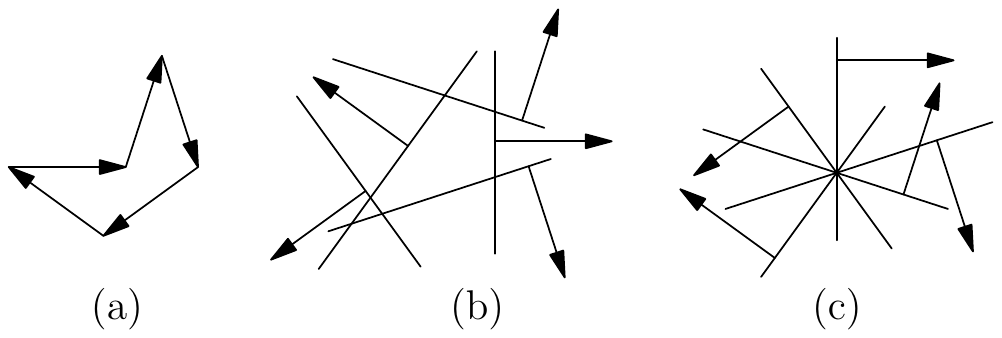}\\*\textbf{Figure 1.}
\end{center}

The resulting configurations have a magical property: the sum of the signed distances from any point to the lines is constant! 

In this paper we investigate this in the general setting of Euclidean $n$-space, where the unit vectors are now normal to hyperplanes. The result is a simple characterization that answers a question posed in this \textsc{Journal} by Elias Abboud \cite{ea}, and links the property to the Fermat point.   

\section*{The characterization}

Let $\mathcal{P}=\{p_1, \ldots, p_k\}$ be a set of oriented hyperplanes in $\mathbb{R}^n$, not necessarily distinct, where the orientation of each $p_i$ is specified by a unit normal $\mathbf{n}_i$. For any point $P\in\mathbb{R}^n$, let $d_i=d(P,p_i)$ be the signed distance from $P$ to $p_i$, with $d_i<0$ if $\mathbf{n}_i$ points to the half-space containing $P$, and $d_i>0$ if $\mathbf{n}_i$ points in the opposite direction. Define $v: \mathbb{R}^n\to \mathbb{R}$ by $v(P)=\sum_{i=1}^k d(P,p_i)$ for all points $P\in\mathbb{R}^n$. We call the set $\mathcal{P}$ \emph{Viviani} if $v$ is a constant (in Figure 1b, the constant is approximately 2.3; in 1c, it is 0). The following theorem characterizes this property.

\begin{theorem}\label{th1} $\mathcal{P}=\{p_1, \ldots, p_k\}$ is Viviani if and only if $\mathbf{n}_1+\cdots+\mathbf{n}_k=\mathbf{0}$.
\end{theorem}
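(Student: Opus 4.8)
The plan is to write the signed distance from a point $P$ to an oriented hyperplane $p_i$ explicitly as an affine function of $P$, and then sum. If $p_i$ has unit normal $\mathbf{n}_i$ and passes through a point $Q_i$, then for any $P\in\mathbb{R}^n$ the signed distance is
\[
d(P,p_i) = \langle P - Q_i,\ \mathbf{n}_i\rangle = \langle P,\mathbf{n}_i\rangle - \langle Q_i,\mathbf{n}_i\rangle,
\]
where the sign convention in the statement (namely $d_i<0$ when $\mathbf{n}_i$ points toward the half-space containing $P$) is exactly what makes this the correct formula; I would check this sign bookkeeping carefully, since it is the one place an error could creep in. Summing over $i$ gives
\[
v(P) = \Big\langle P,\ \sum_{i=1}^k \mathbf{n}_i \Big\rangle - \sum_{i=1}^k \langle Q_i,\mathbf{n}_i\rangle,
\]
so $v$ is an affine function of $P$ whose linear part is the vector $\mathbf{N} := \sum_{i=1}^k \mathbf{n}_i$ and whose constant term depends only on the chosen base points $Q_i$.

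From this formula both directions are immediate. If $\mathbf{N}=\mathbf{0}$, then $v(P) = -\sum_i \langle Q_i,\mathbf{n}_i\rangle$ is independent of $P$, so $\mathcal{P}$ is Viviani. Conversely, if $\mathbf{N}\neq\mathbf{0}$, then $v(P+\mathbf{N}) - v(P) = \langle \mathbf{N},\mathbf{N}\rangle = \|\mathbf{N}\|^2 > 0$, so $v$ is not constant and $\mathcal{P}$ is not Viviani. (Equivalently, $v$ is constant iff its gradient $\nabla v \equiv \mathbf{N}$ vanishes.) One should also note that the value of the constant, when it exists, can legitimately depend on the translates chosen for the hyperplanes — consistent with the worked example where shifting the lines changed the constant from $2.3$ to $0$ — but whether $v$ is constant at all depends only on the normals.

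I do not anticipate a genuine obstacle here; the result is essentially the observation that $v$ is affine. The only subtlety worth spelling out is the sign convention: one must confirm that $d(P,p_i)=\langle P-Q_i,\mathbf{n}_i\rangle$ reproduces the stated rule (negative on the side the normal points away from — wait, rather, negative on the side $\mathbf{n}_i$ points into), and that this is independent of which base point $Q_i\in p_i$ is used, which holds because $\langle Q_i - Q_i',\mathbf{n}_i\rangle = 0$ for $Q_i,Q_i'\in p_i$. With the formula for $v(P)$ in hand, the theorem follows in two lines.
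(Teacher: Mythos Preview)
Your proof is correct and essentially identical to the paper's: both fix a base point on each hyperplane, write $v(P)$ as an affine function whose linear part is (up to sign) $\sum_i \mathbf{n}_i$, and read off the equivalence. The sign bookkeeping you flagged is indeed reversed---under the stated convention ($d_i<0$ when $\mathbf{n}_i$ points toward $P$) the correct formula is $d(P,p_i)=\langle Q_i-P,\mathbf{n}_i\rangle$, so $\nabla v=-\sum_i\mathbf{n}_i$ rather than $+\sum_i\mathbf{n}_i$---but as you anticipated this does not affect the argument.
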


\begin{proof} For any point $X\in\mathbb{R}^n$, we denote by $\mathbf{X}$ the vector from a fixed origin $O$ to $X$. Fix a point $P_i\in p_i$ for each $i=1, \ldots, k$. Then for any $P\in\mathbb{R}^n$, $$v(P)=\sum_{i=1}^k (\mathbf{P}_i-\mathbf{P})\cdot\mathbf{n}_i=\sum_{i=1}^k \mathbf{P}_i\cdot\mathbf{n}_i- \mathbf{P}\cdot\sum_{i=1}^k \mathbf{n}_i.$$ Hence $v$ is a constant if and only if $\sum_{i=1}^k \mathbf{n}_i=\mathbf{0}$.
\end{proof}

The proof reveals that $v$ is an affine function with $\nabla v=-\sum_{i=1}^k \mathbf{n}_i$. Hence $\mathcal{P}$ is Viviani if $v$ assumes the same value at $n+1$ affinely independent points (that is, points not lying in a common hyperplane in $\mathbb{R}^n$). Furthermore, if $\mathcal{P}$ is not Viviani, then the level sets of $v$ are parallel hyperplanes in $\mathbb{R}^n$ orthogonal to $\sum_{i=1}^k \mathbf{n}_i$.

\section*{Examples}

A convex polytope is a finite region of $\mathbb{R}^n$ enclosed by a finite number of hyperplanes. Let $\mathcal{P}=\{p_1,  \ldots, p_k\}$ be the $k$ distinct hyperplanes enclosing a convex polytope, and let $\mathbf{n}_i$ be the outward unit normal to each $p_i$. Then we call a convex polytope \emph{Viviani}, if its associated $\mathcal{P}$ is Viviani. For $n=2$ and $3$, we shall refer to such polytopes as Viviani polygons and polyhedra. The following examples are easily verified using Theorem \ref{th1}.

\medskip 
\noindent\textbf{Example 1.} The only Viviani triangles are equilateral triangles.

\medskip 
\noindent\textbf{Example 2.} The only Viviani quadrilaterals are parallelograms.

\medskip 
\noindent\textbf{Example 3.} Equiangular polygons of any number of sides are Viviani (Figure 1b contains an equiangular pentagon.).

\medskip

\noindent\textbf{Example 4.} The regular (Platonic) polyhedra are Viviani.

\medskip

If $n=2$ and $k\ge 5$, or if $n\ge 3$ and $k\ge 4$, then there is more freedom for $k$ unit vectors in $\mathbb{R}^n$ to sum to $\mathbf{0}$. Viviani polytopes in these cases are no longer very special geometrically, other than the characterization in Theorem \ref{th1}. In fact, there are already infinitely many irregular Viviani tetrahedra.

\medskip 
\noindent\textbf{Example 5.} For $0<t<\pi$, a tetrahedron whose faces have the unit normals 
 $\left\langle \cos \frac{t}{2}, \frac{\sin t}{2}, \frac{1-\cos t}{2}\right\rangle $, $\left\langle -\cos \frac{t}{2}, \frac{\sin t}{2}, \frac{1-\cos t}{2}\right\rangle $, $\left\langle 0,-\sin t,\cos t\right\rangle $, and $\left\langle 0,0,-1\right\rangle $ is Viviani.
 
\section*{Duality}

Let $P_1, \ldots, P_k$ be non-collinear points in $\mathbb{R}^n$, then there is a unique point $P$ in their convex hull, known as the Fermat point, such that the sum of the distances from $P$ to $P_1, \ldots, P_k$ is least possible. If the Fermat point $P$ is distinct from $P_1,\ldots, P_k$, then it is easy to see that $\mathbf{n}_1+\cdots+\mathbf{n}_k=\mathbf{0}$, where $\mathbf{n}_i$ is the unit vector pointing from $P$ to $P_i$ (see Figure 2, for a proof, see \cite{km}). This leads to the following duality.

\begin{center}
\includegraphics*[viewport=181 340 429 452]{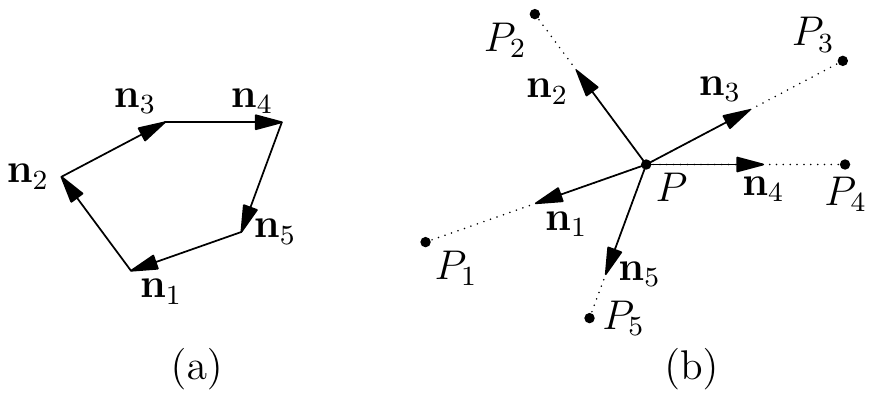}\\*\textbf{}\textbf{Figure 2.}
\end{center}

\begin{theorem}\label{th2} (a) Suppose that $P$ is the Fermat point of, and distinct from, given points $P_1,\ldots, P_k$ in $\mathbb{R}^n$. Let $\mathbf{n}_i$ be the unit vector pointing from $P$ to $P_i$, and $p_i$ be an oriented hyperplane with unit normal $\mathbf{n}_i$. Then  $\mathcal{P}=\{p_1, \ldots, p_k\}$ is Viviani. (b) Conversely, let  $P$ be a fixed point and $\mathcal{P}=\{p_1, \ldots, p_k\}$ be a set of oriented hyperplanes with unit normals $\mathbf{n}_1,\ldots, \mathbf{n}_k$, such that all $d(P, p_i)\ge 0$, or all $d(P, p_i)\le 0$. If $\mathcal{P}$ is Viviani and $P_i$ is the projection of $P$ onto $p_i$, then $P$ is the Fermat point of $P_1,  \ldots, P_k$. 
\end{theorem}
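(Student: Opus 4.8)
The plan is to obtain part (a) essentially for free from Theorem~\ref{th1}, and to reduce part (b) to a single application of the Cauchy--Schwarz inequality that closely mirrors the computation in the proof of Theorem~\ref{th1}. For part (a): since $P$ is the Fermat point of, and distinct from, $P_1,\ldots,P_k$, the total-distance function $f(X)=\sum_{i=1}^k|X-P_i|$ is differentiable at $P$ with
\[
\nabla f(P)=\sum_{i=1}^k\frac{\mathbf{P}-\mathbf{P}_i}{|P-P_i|}=-\sum_{i=1}^k\mathbf{n}_i,
\]
and this gradient must vanish at a minimizer, so $\mathbf{n}_1+\cdots+\mathbf{n}_k=\mathbf{0}$ (the fact recalled just before the theorem; see also \cite{km}). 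Because each $p_i$ has unit normal $\mathbf{n}_i$, Theorem~\ref{th1} immediately gives that $\mathcal{P}$ is Viviani, no matter where the hyperplanes lie; so (a) is a one-liner.

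For part (b), I would first normalize the orientations: replacing every $\mathbf{n}_i$ by $-\mathbf{n}_i$ reverses all the signed distances $d(P,p_i)$, leaves every foot of perpendicular $P_i$ unchanged, and preserves $\sum_i\mathbf{n}_i=\mathbf{0}$, hence preserves the Viviani hypothesis. So assume all $d(P,p_i)\ge 0$. Since $P_i$ is the orthogonal projection of $P$ onto $p_i$, the vector $\mathbf{P}_i-\mathbf{P}$ is a scalar multiple of $\mathbf{n}_i$, and from $(\mathbf{P}_i-\mathbf{P})\cdot\mathbf{n}_i=d(P,p_i)\ge 0$ it follows that $\mathbf{P}_i-\mathbf{P}=d(P,p_i)\,\mathbf{n}_i$ and $|P-P_i|=d(P,p_i)$. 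The heart of the matter is then that for \emph{any} point $X$ and each $i$, Cauchy--Schwarz gives $|X-P_i|=|\mathbf{P}_i-\mathbf{X}|\ge(\mathbf{P}_i-\mathbf{X})\cdot\mathbf{n}_i$, with equality precisely when $\mathbf{P}_i-\mathbf{X}$ is a nonnegative multiple of $\mathbf{n}_i$ --- in particular when $X=P$. Summing over $i$ and invoking $\sum_i\mathbf{n}_i=\mathbf{0}$,
\[
\sum_{i=1}^k|X-P_i|\ \ge\ \sum_{i=1}^k(\mathbf{P}_i-\mathbf{X})\cdot\mathbf{n}_i=\sum_{i=1}^k\mathbf{P}_i\cdot\mathbf{n}_i,
\]
a constant independent of $X$; evaluating at $X=P$, where every inequality is an equality, identifies this constant as $\sum_{i=1}^k|P-P_i|$. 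Hence $P$ globally minimizes the total distance to $P_1,\ldots,P_k$, which is the defining property of their Fermat point (the minimizer lies in the convex hull of the points and, when they are not collinear, is unique).

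The obstacles here are bookkeeping rather than depth. The main one is keeping the signs straight, and in particular seeing why the one-sidedness hypothesis ``all $d(P,p_i)\ge 0$ or all $\le 0$'' is essential: it is exactly what makes $\mathbf{P}_i-\mathbf{P}$ a \emph{nonnegative} multiple of $\mathbf{n}_i$, so that all the Cauchy--Schwarz inequalities become equalities at $X=P$; without it $P$ need not be the Fermat point, and I would add a short remark making this explicit. A minor point is the degenerate case $P\in p_i$ (so $P_i=P$), but here the inequality $(\mathbf{P}_i-\mathbf{X})\cdot\mathbf{n}_i=(\mathbf{P}-\mathbf{X})\cdot\mathbf{n}_i\le|X-P|=|X-P_i|$ still holds with equality at $X=P$, so this case needs no separate treatment.
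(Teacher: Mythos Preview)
Your proof is correct and follows essentially the same route as the paper. For (a) both you and the paper simply invoke $\sum_i\mathbf{n}_i=\mathbf{0}$ at a Fermat point and apply Theorem~\ref{th1}; for (b) the paper's one-line chain $\sum_i\|\mathbf{P}_i-\mathbf{P}\|=\sum_i|d(P,p_i)|=|v(P)|=|v(Q)|<\sum_i\|\mathbf{P}_i-\mathbf{Q}\|$ is exactly your argument in compressed form: your Cauchy--Schwarz step $(\mathbf{P}_i-\mathbf{X})\cdot\mathbf{n}_i\le\|\mathbf{P}_i-\mathbf{X}\|$ is what justifies the final inequality, and your use of $\sum_i\mathbf{n}_i=\mathbf{0}$ is the Theorem~\ref{th1} equivalent of the paper's $v(P)=v(Q)$, since $\sum_i(\mathbf{P}_i-\mathbf{X})\cdot\mathbf{n}_i=v(X)$.
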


\begin{proof} (a) By assumption, $\mathbf{n}_1+\cdots+\mathbf{n}_k=\mathbf{0}$. Hence $\mathcal{P}$ is Viviani by Theorem \ref{th1}.

(b) Let $Q$ be any point different from $P$. Since $\mathcal{P}$ is Viviani, $v(P)=v(Q)$. Hence,
$$\sum_{i=1}^k \left\|\mathbf{P}_i-\mathbf{P}\right\|=\sum_{i=1}^k |d(P, p_i)|=|v(P)|=|v(Q)|<\sum_{i=1}^k \left\|\mathbf{P}_i-\mathbf{Q}\right\|.$$
\end{proof}

\section*{History}

For a triangle $ABC$, the problem of finding the Fermat point of $A,B,C$ was first proposed by Fermat to Torricelli in a private letter. Torricelli solved the problem and his solution was published by his student Viviani in 1659 \cite[Appendix, pp.\ 143--150]{v}. Torricelli's solution uses the fact that the sum of the distances from any point inside an equilateral triangle to its sides is constant, which is commonly known today as Viviani's theorem. However, it is clear from \cite{v} that Viviani proved a bit more, namely that (a) the sum of the distances from any point inside a regular polygon to its sides is constant, and is less than the sum from any point outside the regular polygon; (b) if $A_1\cdots A_k$ is a regular polygon with center $A$, and $B$ is a point different from $A$, then the sum of the distances from $A$ to $A_i$ is less than the sum of the distances from $B$ to $A_i$; and (c) if $B_i$ is a point on the segment $AA_i$, then $A$ is the Fermat point of $B_1, \ldots, B_k$.

Viviani proved (a) by considering areas. For (b) he drew through each $A_i$ the line $p_i$ perpendicular to $AA_i$, and then applied (a) to the regular polygon $\mathcal{P}=\{p_1, \ldots, p_k\}$, as in our proof of Theorem \ref{th2}. Finally, Viviani proved (c) using (b) and the triangle inequality: 
$$\sum_{i=1}^k AB_i=\sum_{i=1}^k (AA_i-B_iA_i)<\sum_{i=1}^k BA_i-\sum_{i=1}^k B_iA_i<\sum_{i=1}^k BB_i.$$

\paragraph{Acknowledgments.} I would like to thank the referees and the editor for helpful suggestions which improved the mathematical accuracy and the presentation of this paper.  

\paragraph{Summary.} Given a set of oriented hyperplanes $\mathcal{P}=\{p_1, ..., p_k\}$ in $\mathbb{R}^n$, define $v(P)$ for any point $P\in\mathbb{R}^n$ as the sum of the signed distances from $P$ to $p_1$,  \ldots, $p_k$. We give a simple geometric characterization of  $\mathcal{P}$ so that $v$ is a constant. The characterization leads to a connection with the Fermat point of $k$ points in $\mathbb{R}^n$. Finally, we discuss historically the full content of Viviani's theorem.

\end{document}